\documentclass[a4paper,12pt]{amsart}


   \linespread{1.3}
	\usepackage{amsmath,amsthm,amsfonts,amssymb,mathrsfs}
   \usepackage{graphicx}
   \usepackage{color}
   \usepackage[nice]{nicefrac}
	\usepackage{dsfont}
	\usepackage{delarray}
	\usepackage{enumerate}


\setlength{\oddsidemargin}{0cm}
\setlength{\evensidemargin}{0cm}
\setlength{\textheight}{23cm}
\setlength{\textwidth}{16cm}

   \newtheorem{theo}{Theorem}[section]
	\newtheorem{prop}[theo]{Proposition}
   \newtheorem{lemm}[theo]{Lemma}
   
   \newtheorem{defi}[theo]{Definition}

	\theoremstyle{remark}
	\newtheorem{rema}[theo]{Remark}

   \usepackage{lineno}
   \usepackage{verbatim}
   \usepackage{fancyhdr}

	\def \K{\mathcal K}

   \def \div{ \nabla \cdot}
   
   \def \Z{\mathbb Z}
   \def \N{\mathbb N}
	
   \newcommand{\dx}{\,\text dx}
   \newcommand{\dy}[1]{\,\text d#1}

	\newcommand\R{\mathbb{R}}

	\def\norm#1#2{\|#1\|_{L^#2(\Omega)}}

\numberwithin{equation}{section}
\numberwithin{theo}{section}

\begin{document}

\title[Stability of solutions to aggregation equation]{Stability of solutions to aggregation equation in bounded domains}

\author{Rafa\l\ Celi\'nski}
\address{
 Instytut Matematyczny, Uniwersytet Wroc\l awski,
 pl. Grunwaldzki 2/4, 50-384 Wroc\-\l aw, POLAND}
\email{Rafal.Celinski@math.uni.wroc.pl}

\date{\today}

\thanks{
Author was supported by the International Ph.D.
Projects Programme of Foundation for Polish Science operated within the
Innovative Economy Operational Programme 2007-2013 funded by European
Regional Development Fund (Ph.D. Programme: Mathematical
Methods in Natural Sciences) and by the MNiSzW grant No. N N201 418839. This is a part of the author Phd dissertation written under supervision of Grzegorz Karch.
}

\begin{abstract} 
We consider the aggregation equation $u_t= \div(\nabla u-u\nabla \mathcal{K}(u))$ in a bounded domain $\Omega\subset \mathbb{R}^d$ with supplemented the Neumann boundary condition and with a nonnegative, integrable initial datum. Here, $\mathcal{K}=\mathcal{K}(u)$ is an integral operator. We study the local and global existence of solutions and we derive conditions which lead us to either the stability or instability of constant solutions. 
\end{abstract}

\keywords{Aggregation, Chemotaxis, Stability}
\bigskip

\subjclass[2000]{ 35Q, 35K55, 35B40}

\maketitle

\section{Introduction}
We consider the initial value problem for the following non-local transport equation
\begin{align}
u_t= \div(\nabla u-u\nabla \K(u)) \quad \text{for}\ x \in \Omega\subset \R^d,\ t > 0, \label{eq}
\end{align}
with supplemented the Neumann boundary conditions {\it i.e}
\begin{align}\label{bound_cond}
\frac{\partial u}{\partial n}=0 \quad \text{for}\ x \in\partial\Omega,\ t>0
\end{align}
and a nonnegative initial datum
\begin{align}\label{eq-ini}
u(x,0)=u_0(x).
\end{align}
Here, the operator $\K(u)=\K(u)(x,t)$ depends linearly on $u$ via the following integral formula
\begin{align}\label{int_op}
\K(u)(x,t)=\int_\Omega K(x,y)u(y,t)\dy{y}
\end{align}
for a certain function $K=K(x,y)$ which we call as an {\it aggregation kernel}. 

There is large number of works considering  the {\it inviscid}  aggregation equation
\begin{align}\label{inviscid}
u_t+\div(u(\nabla K\ast u))=0
\end{align}
in the whole space $\R^d$ which has been used to describe aggregation phenomena in the modelling of animal collective behaviour as well as in some problems
in mechanics of continuous media, for instance, \cite{CMV06,LT04,ME99}. The unknown function $u=u(x,t)\geq 0$ represents either the population density of a species or, in the case of materials applications, a particle density. Equation \eqref{inviscid} was derived from the system of ODE called ``individual cell-based model''  \cite{BV05,S00} representing behaviour of a collection of self-interacting particles via pairwise potential which is describe by aggregation kernel $K$. More precisely, equation \eqref{inviscid} is a continuum limit for a system of particles $X_k(t)$ placed at the point $k$ in time $t$ and evolving by the system of differential equations:
\begin{align*}
\frac{dX_k(t)}{dt}=-\sum_{i\in\Z\backslash\{k\}}\nabla K(X_k(t)-X_i(t)),\quad k\in\Z
\end{align*}
where $K$ is the potential. 

Questions on the global-in-time well-posedness, finite and infinite time blowups, asymptotic behaviour of solutions to equation \eqref{inviscid}, as well as to the equation with an additional diffusion term, have been extensively studied by a number of authors; see {\it e.g.} \cite{BCL09,BL07,BLR11,CDFLS11,KS11,L07,LR09} and reference therein.

One introduces the diffusion term in \eqref{inviscid} to make the model more realistic and to describe the interesting biological (and mathematical as well) phenomenon: competition between aggregation and diffusion, see {\it e.g.} \cite{B95,CPZ04,KS10,N00}. 

In this work, however, our main motivation to study such models is that, in particular case, equation \eqref{eq} corresponds to the parabolic-elliptic system describing chemotaxis, namely: 
\begin{equation}
u_t = \div(\nabla u - u \nabla v) ,  \quad  -\Delta v +av= u, \qquad x\in\Omega,\ t>0 \label{chemo}
\end{equation}
for a positive constant $a$. In this system, the function $u=u(x,t)$
represents the cell density and $v=v(x,t)$ is the concentration of the chemical
attractant which induces a drift force. Here, the function $K(x,y)$ is the Green function of the operator $-\partial_x^2+aI$ on $\Omega$ with the Neumann boundary conditions. Moreover, it is called the Bessel potential and it is singular at the origin if $d\geq 2$. On the other hand, in the one-dimensional case,  when $\Omega=[0,1]$ and $a=1$ this fundamental solution is given by the explicit formula {\it i.e.}
\begin{align}\label{green}
K(x,y)=\frac{1}{2}e^{-|x-y|}+\frac{e^{x+y}+e^{2-x-y}+e^{x-y}+e^{y-x}}{2(e^2-1)}.
\end{align}

In this work we derive some properties of solutions of aggregation equation in a bounded domain under no flux boundary condition \eqref{bound_cond}. The main goal, is to study stability of constant solution. In particular, we derive conditions under which constant solutions to problem \eqref{eq}-\eqref{eq-ini} are either stable or unstable. Here, let us point out that instability result does not depend on dimension of the domain, and cover the case when the aggregation kernel comes from chemotaxis model \eqref{chemo}. Hence, even though solutions are global-in-time and bounded, a constant steady state can be unstable. This mean that even in one-dimensional chemotaxis we can observe the competition between aggregation and diffusion mentioned above. 

For the completeness of exposition we also discuss existence of solutions to \eqref{eq}--\eqref{eq-ini}. In order to do that, we use techniques which are rather standard and well known. In particular, we show that under some general condition on aggregation kernel we can always construct local-in-time solution to \eqref{eq}--\eqref{eq-ini}.  However, some additional regularity assumption on the initial datum have to be imposed if $\nabla_xK$ is in some sense too singular. Moreover, for mildly singular kernels (see Definition \ref{def} for precise statement), problem \eqref{eq}-\eqref{eq-ini} has a global-in-time solution for any nonnegative and integrable initial condition.

\subsection*{Notation}
In this work, the usual norm of the Lebesgue space $L^p (\Omega)$ with respect to the spatial variable is denoted by $\|\cdot\|_p$ for any $p \in [1,\infty]$ and $W^{k,p}(\Omega)$ is the corresponding Sobolev space. 
The letter $C$ corresponds to a generic constants (always independent of $x$ and $t$) which may vary from line to line. Sometimes, we write, {\it e.g.}  $C=C(\alpha,\beta,\gamma, ...)$ when we want to emphasise the dependence of $C$ on parameters~$\alpha,\beta,\gamma, ...$.

\section{Main results and comments.}

\subsection{Stability and instability of constant solutions.}

In this paper, we assume the following conditions on the aggregation kernel
\begin{align}
&\frac{\partial K}{\partial n}(\cdot,y)=0 \quad\text{on}\ \partial\Omega \quad\text{for all}\quad y\in\Omega,\label{v_1}
\end{align}
\begin{align}\label{ass_stab}
\nabla_x\int_\Omega K(x,y)\dy{y}=0,
\end{align}
\begin{multline}
\|\nabla_xK\|_{\infty,q^\prime}\equiv {\rm ess}\sup_{x\in\Omega}\|\nabla_x K(x,\cdot)\|_{q^\prime}+{\rm ess}\sup_{y\in\Omega}\|\nabla_x K(\cdot,y)\|_{q^\prime} <\infty  \label{v_2}\\
 {\rm for\ some }\ q^\prime\in[1,\infty]. 
\end{multline}

\begin{rema}\label{th-mass}
Notice that under the assumptions \eqref{v_1}, a solution of problem \eqref{eq}--\eqref{eq-ini} conserves the integral (the ``mass'') {\it i.e.}
\begin{equation}\label{mass}
\|u(t)\|_1=\int_\Omega u(x,t)\dx = \int_\Omega u_0(x)\dx= \|u_0\|_1\quad \mbox{for all} \quad t\geq 0.
\end{equation}
Indeed, it is sufficient to integrate the equation \eqref{eq} with respect to $x$ and use identities \eqref{bound_cond} and \eqref{v_1}.  Moreover, this solution remains nonnegative if the initial condition is so, due to the maximum principle.
\end{rema}

\begin{rema}\label{const}
Note, that assumption \eqref{ass_stab} implies that every constant function $u\equiv M$ satisfies equation \eqref{eq}. In fact, the chemotaxis model \eqref{chemo} is our main motivation to state this assumption. Indeed, if $(U,V)$ is a stationary solution to \eqref{chemo} then $U$ is constant if and only if $V$ is constant as well. It means that, if the kernel $K$ is the Green function of the operator $-\Delta+aI$ then the term $\nabla\K(U)$ in equation \eqref{eq} for $U=M$, has to be equal $0$ and so, $K$ satisfies \eqref{ass_stab}.
\end{rema}

The main goal of this work is to study stability of constant solution to problem \eqref{eq}-\eqref{eq-ini}. More precisely, we look for sufficient conditions either on the stability of constant solutions or their instability. Our result can be summarise in the following way
\begin{itemize}
\item If the constant solution $u(x,t)=M\geq 0$ of problem \eqref{eq}--\eqref{eq-ini} is sufficiently small, then it is asymptotically stable solution in the linear and nonlinear sense, see Proposition \ref{stab1} and Theorem \ref{stab2} below.
\item If the constant solution $u(x,t)=M\geq 0$ is sufficiently large, then there is a large class of aggregation kernels (which include the kernel coming from chemotaxis system \eqref{chemo}), such that $u(x,t)=M$ is a linearly unstable solution of \eqref{eq}--\eqref{eq-ini}.
\end{itemize}

Thus, we focus on a solution to problem \eqref{eq}-\eqref{eq-ini} in the form
\begin{align*}
u(x,t)=M+\varphi(x,t),
\end{align*}
where $M$ is an arbitrary constant and $\varphi$ is a perturbation. Moreover, we assume that $\int_\Omega \varphi(x,t)\dx=0$ for all $t\geq 0$, to have
\begin{align*}
\int_\Omega u(x,t)\dx=\int_\Omega u_0(x)\dx=\int_\Omega M\dx=M|\Omega|\quad \text{for all}\ t>0.
\end{align*}
Hence, from equation \eqref{eq}, using assumption \eqref{ass_stab}, we obtain the following initial boundary value problem for the perturbation $\varphi$
\begin{align}
&\varphi_t=\Delta\varphi-\div\Big{(}M\nabla\K(\varphi)+ \varphi\nabla\K(\varphi) \Big{)}\label{pre.lin}\\
&\frac{\partial \varphi}{\partial n}=0 \quad \text{for}\ x \in\partial\Omega,\ t>0 \label{pre.lin-bd}\\
&\varphi(x,0)=\varphi_0(x). \label{pre.lin-ini}
\end{align}

We also introduce its linearized counterpart, namely, we skip the term $\div(\varphi\nabla\K(\varphi))$ on the right hand side of \eqref{pre.lin} to obtain
\begin{align}
&\varphi_t=\Delta\varphi-\div\Big{(}M\nabla\K(\varphi)\Big{)} \label{lin}\\
&\frac{\partial \varphi}{\partial n}=0 \quad \text{for}\ x \in\partial\Omega,\ t>0 \label{lin-bd}\\
&\varphi(x,0)=\varphi_0(x). \label{lin-ini}
\end{align}
In the following, we use the linear operator $\mathcal{L}\varphi=-\Delta\varphi+\div\Big{(}M\nabla\K(\varphi)\Big{)}$ with the Neumann boundary conditions, defined via its associated bilinear form
\begin{align}\label{bilin_form}
J(\varphi,\psi)=\int_\Omega\nabla\varphi\cdot\nabla\psi\dx-M\int_\Omega\nabla\K(\varphi)\nabla\psi\dx
\end{align}
for all $\varphi,\psi\in W^{1,2}(\Omega)$.

Here, we recall that a constant $M$ is called a linearily asymptotically stable stationary solution to nonlinear problem \eqref{eq}-\eqref{eq-ini} if the zero solution is an asymptotically stable solution of the linearized problem \eqref{lin}-\eqref{lin-ini}. Moreover, a constant $M$ is called linearily unstable stationary solution to nonlinear problem \eqref{eq}-\eqref{eq-ini} if zero is an unstable solution to linearized problem \eqref{lin}-\eqref{lin-ini}.

\begin{prop}[Linear stability of constant solutions]\label{stab1}
Assume, that the aggregation function $K(x,y)$ satisfy conditions \eqref{v_1} and \eqref{ass_stab}. If, moreover, the operator $\nabla\K:L^2(\Omega)\to L^2(\Omega)$ given by the form $\nabla\K(\varphi)=\int_\Omega\nabla_x K(x,y)\varphi(y)\dy{y}$ is bounded and if 
\begin{align}\label{linear_ass}
M\|\nabla\K\|_{L^2\to L^2}<\sqrt{\lambda_1},
\end{align}
where $\lambda_1$ is the first non-zero eigenvalue of $-\Delta$ on $\Omega$ under the Neumann boundary condition then $M$ is a linearily asymptotically stable stationary solution to problem \eqref{eq}-\eqref{eq-ini}.
\end{prop}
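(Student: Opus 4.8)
The plan is to run the standard energy argument for the linear problem \eqref{lin}--\eqref{lin-ini}, the structural inputs being the coercivity of the bilinear form $J$ on mean-zero functions and the Poincar\'e inequality. First I would record that the reduction leading to \eqref{lin} forces $\int_\Omega\varphi_0\dx=0$ and that this property propagates in time: integrating \eqref{lin} over $\Omega$ and using \eqref{lin-bd} together with \eqref{v_1} (which guarantees $\nabla\K(\varphi)\cdot n=0$ on $\partial\Omega$, since $\nabla\K(\varphi)(x)\cdot n=\int_\Omega\frac{\partial K}{\partial n}(x,y)\varphi(y)\dy{y}$) gives $\frac{d}{dt}\int_\Omega\varphi\dx=0$. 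Hence $\varphi(t)$ stays in $L^2_0(\Omega):=\{f\in L^2(\Omega):\int_\Omega f\dx=0\}$ for all $t\ge0$, which is exactly what makes the Poincar\'e inequality $\|\varphi\|_2\le\lambda_1^{-1/2}\|\nabla\varphi\|_2$ available.

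Next I would test \eqref{lin} with $\varphi$, integrate over $\Omega$ and integrate by parts, discarding all boundary integrals by means of \eqref{lin-bd} and \eqref{v_1}, to obtain the identity
\begin{align*}
\tfrac12\tfrac{d}{dt}\|\varphi(t)\|_2^2=-\|\nabla\varphi\|_2^2+M\int_\Omega\nabla\K(\varphi)\cdot\nabla\varphi\dx=-J(\varphi,\varphi).
\end{align*}
To bound $J(\varphi,\varphi)$ from below I would estimate the cross term by Cauchy--Schwarz, the assumed boundedness of $\nabla\K$, and Poincar\'e:
\begin{align*}
M\Big|\int_\Omega\nabla\K(\varphi)\cdot\nabla\varphi\dx\Big|\le M\|\nabla\K\|_{L^2\to L^2}\,\|\varphi\|_2\,\|\nabla\varphi\|_2\le\frac{M\|\nabla\K\|_{L^2\to L^2}}{\sqrt{\lambda_1}}\,\|\nabla\varphi\|_2^2,
\end{align*}
so that, with $c:=1-M\|\nabla\K\|_{L^2\to L^2}/\sqrt{\lambda_1}>0$ by hypothesis \eqref{linear_ass}, one gets $J(\varphi,\varphi)\ge c\|\nabla\varphi\|_2^2\ge c\lambda_1\|\varphi\|_2^2$.

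Combining the two displays yields the differential inequality $\frac{d}{dt}\|\varphi(t)\|_2^2\le-2c\lambda_1\|\varphi(t)\|_2^2$, and Gronwall's lemma gives the exponential decay $\|\varphi(t)\|_2\le e^{-c\lambda_1 t}\|\varphi_0\|_2$ for all $t\ge0$; in particular the zero solution of \eqref{lin}--\eqref{lin-ini} is asymptotically stable in $L^2(\Omega)$, which is precisely the assertion that $M$ is a linearly asymptotically stable stationary solution of \eqref{eq}--\eqref{eq-ini}. The only non-routine point is the rigorous justification of the formal energy identity: since $J$ is a bounded bilinear form on $W^{1,2}(\Omega)$ that, by the estimate above, is coercive on $L^2_0(\Omega)\cap W^{1,2}(\Omega)$, the operator $-\mathcal{L}$ generates an analytic semigroup there and the manipulations are legitimate for the corresponding weak solutions; alternatively one argues on a Galerkin approximation in the eigenbasis of the Neumann Laplacian and passes to the limit. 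I expect this technical step, rather than the estimate itself, to be the only real obstacle.
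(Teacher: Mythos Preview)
Your argument is correct and follows essentially the same energy method as the paper: test \eqref{lin} with $\varphi$, estimate the cross term, and close via the Poincar\'e inequality on mean-zero functions. The only cosmetic difference is that the paper splits the cross term with Young's inequality $ab\le\tfrac12a^2+\tfrac12b^2$ to arrive at $\tfrac{d}{dt}\|\varphi\|_2^2\le(-\lambda_1+M^2\|\nabla\K\|_{L^2\to L^2}^2)\|\varphi\|_2^2$, whereas you keep Cauchy--Schwarz and feed Poincar\'e into the $\|\varphi\|_2$ factor first; your route even yields a slightly sharper decay rate $2\lambda_1\bigl(1-M\|\nabla\K\|_{L^2\to L^2}/\sqrt{\lambda_1}\bigr)$, but the strategy is the same.
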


We prove this proposition in Section \ref{stability}. Here, we only emphasise that proof allow us to show the nonlinear stability of constant steady states. Under slightly stronger assumptions imposed on the kernel $K$.

\begin{theo}[Nonlinear stability of constant solution]\label{stab2}
Let the assumptions of Proposition \ref{stab1} hold true. If moreover $\|\nabla_x K\|_{\infty,2}<\infty$ then there exists a positive constant $\eta=\eta(\nabla_xK,M,\Omega)$ such that for every $\varphi_0\in L^2(\Omega)$ satisfying $\|\varphi_0\|_2<\eta$ and $\int_\Omega\varphi_0(x)\dx=0$, the perturbed problem \eqref{pre.lin}-\eqref{pre.lin-ini} has a solution $\varphi\in C([0,\infty),L^2(\Omega))$ such that $\int_\Omega \varphi(x,t)\dx=0$ for all $t>0$. Moreover, we have
\begin{align*}
\|\varphi(t)\|_2\rightarrow 0 \quad\text{as}\quad t\to\infty.
\end{align*}
\end{theo}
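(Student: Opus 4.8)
The plan is to run a standard energy/contraction argument in $L^2(\Omega)$ restricted to the mean-zero subspace. Write $X=\{\varphi\in L^2(\Omega):\int_\Omega\varphi\dx=0\}$; since mass is conserved (Remark \ref{th-mass}) and the initial datum is mean-zero, solutions stay in $X$. On $X$ the Poincar\'e inequality gives $\|\nabla\psi\|_2^2\geq\lambda_1\|\psi\|_2^2$, and this is the quantitative gain that assumption \eqref{linear_ass} is designed to exploit. First I would establish an a priori differential inequality for a genuine solution: multiply \eqref{pre.lin} by $\varphi$, integrate by parts using the Neumann condition \eqref{pre.lin-bd}, and obtain
\begin{align*}
\frac{1}{2}\frac{d}{dt}\|\varphi(t)\|_2^2 = -\|\nabla\varphi\|_2^2 + M\int_\Omega\nabla\K(\varphi)\cdot\nabla\varphi\dx + \int_\Omega\varphi\,\nabla\K(\varphi)\cdot\nabla\varphi\dx.
\end{align*}
The first two terms are controlled by the linear operator $\mathcal L$: by \eqref{linear_ass} and Cauchy--Schwarz, $-\|\nabla\varphi\|_2^2+M\int\nabla\K(\varphi)\cdot\nabla\varphi \leq -(1-M\|\nabla\K\|_{L^2\to L^2}/\sqrt{\lambda_1})\|\nabla\varphi\|_2^2 \leq -c\lambda_1\|\varphi\|_2^2$ for some $c>0$ (using Poincar\'e). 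The cubic term is the genuinely nonlinear one: here I would use $\|\nabla_xK\|_{\infty,2}<\infty$ to bound $\|\nabla\K(\varphi)\|_\infty\leq \|\nabla_xK\|_{\infty,2}\|\varphi\|_2$ (this is why the stronger hypothesis on $K$ is needed beyond Proposition \ref{stab1}), so that $|\int\varphi\,\nabla\K(\varphi)\cdot\nabla\varphi| \leq C\|\varphi\|_2^2\|\nabla\varphi\|_2 \le \frac{c}{2}\lambda_1\|\varphi\|_2^2 + C'\|\varphi\|_2^2\|\nabla\varphi\|_2^2 \cdot (\text{something})$; more cleanly, absorb it into $\|\nabla\varphi\|_2^2$ once $\|\varphi\|_2$ is small. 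This yields, as long as $\|\varphi(t)\|_2$ stays below a threshold $\eta$,
\begin{align*}
\frac{d}{dt}\|\varphi(t)\|_2^2 \leq -c\lambda_1\|\varphi(t)\|_2^2,
\end{align*}
whence $\|\varphi(t)\|_2^2\leq e^{-c\lambda_1 t}\|\varphi_0\|_2^2$, which is decreasing; so if $\|\varphi_0\|_2<\eta$ the bound is never violated, giving both the global-in-time bound and the decay $\|\varphi(t)\|_2\to 0$.

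The second half of the work is constructing the solution, not just estimating it. I would set up a fixed-point/Galerkin scheme: either Galerkin truncation in the eigenbasis of $-\Delta$ with Neumann conditions, deriving the same energy estimate uniformly in the truncation parameter and passing to the limit, or a Duhamel/Banach fixed point in $C([0,T],L^2)$ for the mild formulation $\varphi(t)=e^{t\Delta}\varphi_0 - \int_0^t \nabla e^{(t-s)\Delta}\cdot\big(M\nabla\K(\varphi)+\varphi\nabla\K(\varphi)\big)(s)\dy{s}$, using the smoothing bound $\|\nabla e^{t\Delta}f\|_2\leq Ct^{-1/2}\|f\|_2$ together with the $L^\infty$ bound on $\nabla\K(\varphi)$ to close the contraction on a small ball; this presumably overlaps with the local existence theory the paper develops for mildly singular kernels. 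The mean-zero constraint is preserved automatically because $\int_\Omega\div(\cdots)\dx=0$ and $\int_\Omega\Delta\varphi\dx=0$ under the Neumann condition. Combining local existence with the a priori bound above and a standard continuation argument extends the solution to $[0,\infty)$.

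The main obstacle I anticipate is handling the cubic term $\div(\varphi\nabla\K(\varphi))$ cleanly: one must ensure that the $L^\infty$ bound $\|\nabla\K(\varphi)\|_\infty\lesssim\|\nabla_xK\|_{\infty,2}\|\varphi\|_2$ is enough to dominate $\int\varphi\,\nabla\K(\varphi)\cdot\nabla\varphi$ by a small multiple of $\|\nabla\varphi\|_2^2$ plus a harmless $\|\varphi\|_2^2$ term, and simultaneously that the same nonlinearity is Lipschitz on small $L^2$-balls so that the fixed-point argument actually converges — the two requirements are compatible but need the smallness threshold $\eta$ to be chosen depending on the margin $1-M\|\nabla\K\|_{L^2\to L^2}/\sqrt{\lambda_1}$, on $\|\nabla_xK\|_{\infty,2}$, on $M$, and on $\lambda_1=\lambda_1(\Omega)$, exactly as in the statement. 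A secondary technical point is justifying the integration by parts and the differentiation of $\|\varphi(t)\|_2^2$ for merely $L^2$-valued mild solutions, which is handled in the usual way by working first with the Galerkin approximations or with smooth data and then passing to the limit.
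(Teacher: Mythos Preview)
Your approach is correct and matches the paper's: derive the energy identity by testing with $\varphi$, bound the linear part via Cauchy--Schwarz together with assumption \eqref{linear_ass}, control the cubic term through $\|\nabla\K(\varphi)\|_\infty\leq\|\nabla_xK\|_{\infty,2}\|\varphi\|_2$, and close with the resulting differential inequality (the paper writes it as $f'\leq -C_1f+C_2f^2$, which is equivalent to your bootstrap formulation). The paper is in fact terser than your outline---it records only the a priori estimate and the ODE conclusion, and does not spell out the existence/continuation argument that you correctly flag and sketch.
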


Next, we discuss instability of constant solutions.

\begin{theo}[Instability of constant solutions]\label{instab}
Let $w_1=w_1(x)$ be the eigenfunction of $-\Delta$ on $\Omega$ under the Neumann boundary condition corresponding to the first nonzero eigenvalue $\lambda_1$, and such that $\|w_1\|_2=1$. Assume that $\|\nabla\K\|_{L^2\to L^2}<\infty$. If moreover, the aggregation function $K(x,y)$ satisfy 
\begin{align}\label{ass-instab}
\int_\Omega\int_\Omega K(x,y)w_1(y)w_1(x)\dx\dy{y}=A>0,
\end{align}
then for $M>1/A$ the constant solution $M$ of problem \eqref{eq}-\eqref{eq-ini} is linearily unstable stationary solution.
\end{theo}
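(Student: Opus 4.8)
The plan is to show that the quadratic form $J$ from \eqref{bilin_form} attached to the linearized operator $\mathcal{L}$ becomes negative in the direction of the first Neumann eigenfunction $w_1$; this forces $\mathcal{L}$ to possess a negative eigenvalue, and the corresponding eigenfunction yields a solution of the linearized problem \eqref{lin}--\eqref{lin-ini} growing exponentially in $L^2(\Omega)$, which is precisely the linear instability of $M$.

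First I would fix the functional framework. By \eqref{bound_cond} and \eqref{v_1} the operator $\mathcal{L}$ maps the mean-zero subspace $L^2_0(\Omega)=\{\varphi\in L^2(\Omega):\int_\Omega\varphi\dx=0\}$ into itself, and $\|\nabla\K\|_{L^2\to L^2}<\infty$ yields, via Young's inequality, a Gårding estimate $J(\varphi,\varphi)\ge\frac12\|\nabla\varphi\|_2^2-C\|\varphi\|_2^2$ on the form domain $W^{1,2}(\Omega)\cap L^2_0(\Omega)$. Hence $\mathcal{L}$, defined through the (symmetric) form $J$, is a self-adjoint, lower semibounded operator on $L^2_0(\Omega)$; since $\Omega$ is bounded the embedding of $W^{1,2}(\Omega)\cap L^2_0(\Omega)$ into $L^2_0(\Omega)$ is compact, so $\mathcal{L}$ has compact resolvent and discrete spectrum $\mu_1\le\mu_2\le\cdots$ with
\[
\mu_1=\inf\left\{\frac{J(\varphi,\varphi)}{\|\varphi\|_2^2}:\varphi\in W^{1,2}(\Omega)\cap L^2_0(\Omega),\ \varphi\ne0\right\},
\]
the infimum being attained at an eigenfunction $\Phi$ of $\mathcal{L}$ (which then lies in the domain of $\mathcal{L}$ and in particular satisfies the Neumann condition).

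The heart of the proof is to evaluate $J$ at $w_1$. Since $-\Delta w_1=\lambda_1 w_1$ and $\|w_1\|_2=1$, the first term of \eqref{bilin_form} equals $\int_\Omega|\nabla w_1|^2\dx=\lambda_1$. Integrating the second term by parts and using that $w_1$ satisfies the Neumann condition (and \eqref{v_1}, so that $\nabla\K(w_1)$ has vanishing normal trace), one gets
\[
\int_\Omega\nabla\K(w_1)\cdot\nabla w_1\dx=-\int_\Omega\K(w_1)\,\Delta w_1\dx=\lambda_1\int_\Omega\K(w_1)\,w_1\dx=\lambda_1\int_\Omega\int_\Omega K(x,y)w_1(y)w_1(x)\dy{y}\dx=\lambda_1 A,
\]
by \eqref{int_op} and \eqref{ass-instab}. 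Therefore $J(w_1,w_1)=\lambda_1-M\lambda_1 A=\lambda_1(1-MA)$, and since $\lambda_1>0$ this is strictly negative as soon as $M>1/A$. Consequently $\mu_1\le\lambda_1(1-MA)<0$.

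Finally, with $\Phi\in L^2_0(\Omega)$ an eigenfunction of $\mathcal{L}$ for the eigenvalue $\mu_1<0$, the function $\varphi(x,t)=e^{-\mu_1 t}\Phi(x)$ solves the linearized problem \eqref{lin}--\eqref{lin-ini} with initial datum $\varphi_0=\Phi$ (of zero mean), and $\|\varphi(t)\|_2=e^{|\mu_1|t}\|\Phi\|_2\to\infty$ as $t\to\infty$; by linearity, replacing $\Phi$ by $\delta\Phi$ with $\delta>0$ arbitrarily small still gives a solution leaving any fixed ball about $0$. Hence $0$ is an unstable stationary solution of \eqref{lin}--\eqref{lin-ini}, i.e. $M$ is a linearly unstable stationary solution of \eqref{eq}--\eqref{eq-ini}. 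The only genuinely delicate point is the functional-analytic setup of $\mathcal{L}$ — in particular checking that the form $J$ is symmetric (immediate when the kernel $K$ is symmetric, as for the chemotaxis Green function \eqref{green}) and lower semibounded, and that the variational infimum defining $\mu_1$ is attained; once these standard facts are in place, the identity $J(w_1,w_1)=\lambda_1(1-MA)$ is the decisive, elementary ingredient.
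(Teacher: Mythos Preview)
Your proof is correct and follows essentially the same route as the paper: show that $J(w_1,w_1)=\lambda_1(1-MA)<0$ when $M>1/A$, deduce that the Rayleigh infimum of $\mathcal{L}$ on the mean-zero subspace is negative, and exhibit the exponentially growing mode $e^{-\mu_1 t}\Phi$. The only packaging difference is that where you invoke abstract self-adjoint spectral theory (compact resolvent, form methods), the paper proves the needed variational fact by hand in Lemma~\ref{rayley}, establishing directly that the infimum in \eqref{R} is finite, attained, and yields an eigenfunction---thereby sidestepping the symmetry issue you flag at the end.
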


\begin{rema}
Let us notice that the aggregation function $K$ which comes from chemotaxis model \eqref{chemo} satisfies the condition \eqref{ass-instab}. Indeed, in this case, $K(x,y)$ is a fundamental solution of the operator $-\Delta+aI$ in a bounded domain supplemented with the Neumann boundary conditions. Thus, the function
\begin{align*}
w(x)=\int_\Omega K(x,y)w_1(y)\dy{y}
\end{align*}
satisfies the following equation
\begin{align}\label{rem}
-\Delta w+aw=w_1.
\end{align}
After multiplying equation \eqref{rem} by $w_1$ and integrating over $\Omega$ and using the Neumann boundary condition we obtain
\begin{align*}
-\int_\Omega\Delta ww_1\dx+a\int_\Omega ww_1\dx=\int_\Omega (w_1)^2\dx.
\end{align*}
Obviously, by the definition of $A$, we have $\int_\Omega ww_1\dx=A$. Thus, after integrating by parts we obtain
\begin{align}\label{rem1}
-\int_\Omega w\Delta w_1\dx=1-aA.
\end{align}
Finally, we use the fact that $w_1$ is the eigenfunction of $-\Delta$ to get
\begin{align*}
\lambda_1\int_\Omega ww_1\dx=1-aA,
\end{align*}
which implies that $A=\frac{1}{a+\lambda_1}>0$.
\end{rema}

\begin{rema}
Our stability results on constant steady states corresponds to the well-known results on the global existence versus blow-up of solutions to Keller-Segel system \eqref{chemo}. In particular, for general kernels (see Definition \ref{def} below), where solutions of problem \eqref{eq}--\eqref{eq-ini} are global-in-time, we can still observe the competition between the diffusion and the aggregation.
\end{rema}

\begin{rema}
Let us mention, that the {\it inviscid} aggregation equation \eqref{inviscid} in the whole space $\R^d$ can be formally considered as a gradient flow of the energy functional
\begin{align*}
E(u)=\frac{1}{2}\int_{\R^d}\int_{\R^d} K(x-y)u(x)u(y)\dx\dy{y}
\end{align*}
with respect to the Euclidean Wasserstein distance as introduced in \cite{O01} and generalized to a large class of PDEs in \cite{CMV06} and in \cite{CDFLS11}. We have proved that, in some sense, if this energy functional on the first eigenfunction of $-\Delta$ is positive then sufficiently large constant solutions of the system (1.1)-(1.3) are unstable.
\end{rema}

The proofs of Theorems \ref{stab2} and \ref{instab} are given in Section \ref{stability}.

\subsection {Existence of solutions.}

For the completeness of exposition we also study the existence of solution to \eqref{eq}--\eqref{eq-ini}.
First, let us introduce terminology analogous to that one in \cite{KS11}.

\begin{defi}\label{def}
The aggregation kernel $K:\Omega\times\Omega\rightarrow\R$ is called
\begin{itemize}
\item {\it mildly singular} if $\|\nabla_xK\|_{\infty,q^\prime}<\infty$ for some $q^\prime\in(d,\infty]$;
\item {\it strongly singular} if $\|\nabla_xK\|_{\infty,q^\prime}<\infty$ for some $q^\prime\in[1,d]$ and $\|\nabla_xK\|_{\infty,q^\prime}=\infty$ for every $q^\prime>d$.
\end{itemize}
\end{defi}

Notice that aggregation kernel taken from one dimensional chemotaxis model \eqref{chemo} is mildly singular in the sense stated above. 

We begin our study of  properties of solutions to the initial value problem \eqref{eq}--\eqref{eq-ini} by showing the existence of solutions which depends on the quantity $\|\nabla_xK\|_{\infty,q^\prime}$ defined in \eqref{v_2}.

First, we show that for mildly singular kernels, solutions to the problem \eqref{eq}-\eqref{eq-ini} are global in time.

\begin{theo}[Global existence for mildly singular kernels]\label{local-ex1}
Assume that there exists $q^\prime\in(d,\infty]$ such that $\|\nabla_xK\|_{\infty,q^\prime}<\infty$ where $\|\nabla_xK\|_{\infty,q^\prime}$ is defined in \eqref{v_2}. Denote $q=\frac{q^\prime}{q^\prime-1}\in[1,d/(d-1))$. Then for every initial condition $u_0 \in L^1 (\Omega)$ such that $u_0(x)\geq 0$ and for every $T>0$ problem \eqref{eq}-\eqref{eq-ini} has a unique mild solution in the space 
\begin{align*}
\mathcal{Y}_T = C([0, T], L^1 (\Omega)) \cap \{u: C\big{(}[0,T], L^q(\Omega)\big{)},\ \sup_{0\le t\le T} t^{\frac{d}{2}(1-\frac{1}{q})}\|u\|_q<\infty\}
\end{align*} 
equipped with the norm $\|u\|_{\mathcal{Y}_T}\equiv\sup_{0\leq t\leq T}\|u\|_1+ \sup_{0\leq t\leq T}t^{\frac{d}{2}(1-\frac{1}{q})}\|u\|_q$.
\end{theo}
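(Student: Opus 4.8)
The plan is to solve \eqref{eq}--\eqref{eq-ini} by the Banach fixed point theorem applied to the Duhamel (mild) formulation
\[
u(t)=e^{t\Delta}u_0-\int_0^t e^{(t-s)\Delta}\,\div\big(u(s)\,\nabla\K(u(s))\big)\dy{s},
\]
where $\{e^{t\Delta}\}_{t\ge0}$ is the heat semigroup on $\Omega$ with the Neumann boundary condition, with kernel $G=G(t,x,y)$. Integrating the divergence by parts against $G$ and using that \eqref{v_1} gives $\nabla\K(\varphi)\cdot n=\int_\Omega\frac{\partial K}{\partial n}(\cdot,y)\varphi(y)\dy{y}=0$ on $\partial\Omega$ (so the boundary term drops), this rewrites as
\[
u(t,x)=\int_\Omega G(t,x,y)u_0(y)\dy{y}+\int_0^t\!\!\int_\Omega \nabla_y G(t-s,x,y)\cdot\big(u\,\nabla\K(u)\big)(s,y)\dy{y}\dy{s}.
\]
The only input from linear theory is the family of smoothing estimates for the Neumann semigroup on the bounded domain $\Omega$, namely $\|e^{t\Delta}f\|_p\le C\,t^{-\frac d2(\frac1r-\frac1p)}\|f\|_r$ and $\|\nabla e^{t\Delta}f\|_p\le C\,t^{-\frac12-\frac d2(\frac1r-\frac1p)}\|f\|_r$ for $1\le r\le p\le\infty$, $0<t\le T$, with $C=C(\Omega,T)$; on $(0,T]$ these follow from the classical Gaussian-type bounds for $G$ and $\nabla_yG$.

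The nonlinearity is controlled by a single Hölder inequality together with \eqref{v_2}: since $q=q'/(q'-1)$, one has $\|\nabla\K(\varphi)\|_\infty\le\|\nabla_xK\|_{\infty,q'}\|\varphi\|_q$, whence $\|u\,\nabla\K(u)\|_1\le\|\nabla_xK\|_{\infty,q'}\|u\|_1\|u\|_q$ and $\|u\,\nabla\K(u)\|_q\le\|\nabla_xK\|_{\infty,q'}\|u\|_q^2$. Write $a=\tfrac d2(1-\tfrac1q)$; the hypothesis $q'\in(d,\infty]$, i.e. $q\in[1,d/(d-1))$, is exactly the statement $a<\tfrac12$. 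Denoting by $\mathcal N(u)$ the right-hand side of the mild formulation, for $u\in\mathcal{Y}_T$ one gets $\|e^{t\Delta}u_0\|_1\le\|u_0\|_1$ and $t^a\|e^{t\Delta}u_0\|_q\le C\|u_0\|_1$, while for the Duhamel term, bounding $\|u(s)\|_1\le\|u\|_{\mathcal{Y}_T}$ and $\|u(s)\|_q\le s^{-a}\|u\|_{\mathcal{Y}_T}$ and using the gradient estimate with $r=1$,
\[
\|\mathcal N(u)(t)\|_1\le C\|u_0\|_1+C\,\|u\|_{\mathcal{Y}_T}^2\int_0^t(t-s)^{-\frac12}s^{-a}\dy{s},
\]
\[
t^a\|\mathcal N(u)(t)\|_q\le C\|u_0\|_1+C\,\|u\|_{\mathcal{Y}_T}^2\,t^a\!\int_0^t(t-s)^{-\frac12-a}s^{-a}\dy{s}.
\]
All exponents $-\tfrac12$, $-\tfrac12-a$, $-a$ are $>-1$ precisely because $a<\tfrac12$, so the Beta integrals converge and produce a factor $C\,t^{\frac12-a}$ with $\tfrac12-a>0$; hence $\|\mathcal N(u)\|_{\mathcal{Y}_T}\le C_0\|u_0\|_1+C_1T^{\frac12-a}\|u\|_{\mathcal{Y}_T}^2$, and the same computation on the bilinear difference gives $\|\mathcal N(u)-\mathcal N(v)\|_{\mathcal{Y}_T}\le C_1T^{\frac12-a}(\|u\|_{\mathcal{Y}_T}+\|v\|_{\mathcal{Y}_T})\|u-v\|_{\mathcal{Y}_T}$, with $C_0=C_0(\Omega)$ and $C_1=C_1(\Omega,\|\nabla_xK\|_{\infty,q'})$.

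Choosing $T=T_0$ small enough (depending only on $\|u_0\|_1$, $\|\nabla_xK\|_{\infty,q'}$, $\Omega$) that $4C_0C_1T_0^{\frac12-a}\|u_0\|_1<1$, the map $\mathcal N$ is a contraction of the closed ball of radius $2C_0\|u_0\|_1$ in $\mathcal{Y}_{T_0}$ into itself, producing a mild solution on $[0,T_0]$ that lies in $\mathcal{Y}_{T_0}$; its continuity in $L^1$ and the weighted $L^q$ bound come from strong continuity of $e^{t\Delta}$ and the estimates above, and uniqueness in $\mathcal{Y}_{T_0}$ follows by a standard Grönwall/continuation argument. To globalize, note that once $u\in\mathcal{Y}_{T_0}$ the drift $b:=\nabla\K(u)$ satisfies $\|b(s)\|_\infty\le C s^{-a}\|u\|_{\mathcal{Y}_{T_0}}$ with $a<1$, so $b\in L^1((0,T_0),L^\infty(\Omega))$; the linear Neumann problem $u_t=\Delta u-\div(ub)$ then obeys the parabolic maximum principle, whence $u_0\ge0$ implies $u\ge0$, and, exactly as in Remark \ref{th-mass}, integrating \eqref{eq} and using \eqref{bound_cond} and \eqref{v_1} gives $\|u(t)\|_1=\|u_0\|_1$ on $[0,T_0]$. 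Since $T_0$ depends only on $\|u_0\|_1$, the conserved mass lets us restart from $u(T_0)$ with the same step; iterating covers any prescribed $[0,T]$, which proves the theorem.

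The main obstacle is twofold. First, the scheme closes only for mildly singular kernels: it hinges on the strict inequality $a=\tfrac d2(1-\tfrac1q)<\tfrac12$ (equivalently $q'>d$), which makes the time singularity $(t-s)^{-\frac12-a}$ in the Duhamel term integrable and the resulting power $T^{\frac12-a}$ strictly positive; for a strongly singular kernel ($q'\le d$) this integral diverges and one must instead start the fixed point in a space reflecting extra regularity of $u_0$. Second, a careful derivation of the Gaussian-type bounds for $G$ and $\nabla_yG$ on the bounded domain $\Omega$, and of the nonnegativity of the mild (as opposed to classical) solution, needs a short approximation/regularisation argument; neither is conceptually difficult, but both must be written out.
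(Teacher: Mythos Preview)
Your proof is correct and follows essentially the same route as the paper: a fixed-point argument for the Duhamel formulation in the space $\mathcal{Y}_T$, using the Neumann heat-semigroup smoothing estimates together with the H\"older bound $\|\nabla\K(\varphi)\|_\infty\le\|\nabla_xK\|_{\infty,q'}\|\varphi\|_q$ to obtain a bilinear estimate with the small factor $T^{1/2-a}$ (where $a=\tfrac d2(1-\tfrac1q)<\tfrac12$), then iterating using mass conservation to go global. The only cosmetic difference is that in the $L^q$ estimate of the Duhamel term you apply the $L^1\!\to\!L^q$ gradient bound to $\|u\,\nabla\K(u)\|_1$, producing $(t-s)^{-1/2-a}s^{-a}$, whereas the paper applies the $L^q\!\to\!L^q$ gradient bound to $\|u\,\nabla\K(u)\|_q$, producing $(t-s)^{-1/2}s^{-2a}$; both Beta integrals converge under the same condition $a<\tfrac12$ and yield the same power $T^{1/2-a}$.
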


Next, we show the local-in-time existence of solutions to \eqref{eq}-\eqref{eq-ini} for the case of strongly singular kernels.

\begin{theo}[Local existence for strongly singular kernels]\label{local-ex2}
Assume that there exists $q^\prime\in[1,d]$ such that $\|\nabla_xK\|_{\infty,q^\prime}<\infty$.
Let $q\in[d/(d-1),\infty]$ satisfy $1/q+1/q^\prime=1$. Then for every positive $u_0 \in L^1(\Omega)\cap L^q(\Omega)$ there exists $T=T(\|u_0\|_1,\|u_0\|_q,\|\nabla_xK\|_{\infty,q^\prime})>0$ and a unique mild solution of problem \eqref{eq}--\eqref{eq-ini} in the space 
\begin{align*}
\mathcal{X}_T = C([0, T], L^1 (\Omega)) \cap C([0,T], L^q(\Omega))
\end{align*} 
equipped with the norm $\|u\|_{\mathcal{X}_T}\equiv\sup_{0\leq t\leq T}\|u\|_1+ \sup_{0\leq t\leq T}\|u\|_q$.
\end{theo}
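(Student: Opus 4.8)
The plan is to set up a standard contraction-mapping (Picard-type) argument in the Banach space $\mathcal{X}_T$, treating the nonlinear drift term perturbatively around the linear heat semigroup with Neumann boundary conditions. Write the problem in Duhamel (mild) form: a solution is a fixed point of
\begin{align*}
\Phi(u)(t) = e^{t\Delta}u_0 - \int_0^t e^{(t-s)\Delta}\,\div\big(u(s)\nabla\K(u(s))\big)\dy{s},
\end{align*}
where $\{e^{t\Delta}\}_{t\ge 0}$ denotes the Neumann heat semigroup on $\Omega$. First I would collect the needed smoothing estimates for this semigroup: the usual $L^p\to L^r$ bounds $\|e^{t\Delta}f\|_r \le C t^{-\frac{d}{2}(\frac1p-\frac1r)}\|f\|_p$ for $1\le p\le r\le\infty$, together with the gradient estimate $\|\nabla e^{t\Delta}f\|_r \le C t^{-\frac12-\frac{d}{2}(\frac1p-\frac1r)}\|f\|_p$, used here to absorb the divergence, i.e. $\|e^{(t-s)\Delta}\div F\|_r \le C (t-s)^{-\frac12-\frac{d}{2}(\frac1p-\frac1r)}\|F\|_p$. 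Since $u_0\in L^1\cap L^q$, the linear part $e^{t\Delta}u_0$ already lies in $C([0,T],L^1)\cap C([0,T],L^q)$ with norm controlled by $\|u_0\|_1+\|u_0\|_q$, so it suffices to make the Duhamel integral a small (in $T$) perturbation.

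The key estimate is the bilinear bound on $F = u\nabla\K(v)$. Using assumption \eqref{v_2}, for $u\in L^q(\Omega)$ we have the pointwise-in-$x$ bound $|\nabla\K(v)(x)| \le \|\nabla_xK(x,\cdot)\|_{q'}\|v\|_q \le \|\nabla_xK\|_{\infty,q'}\|v\|_q$, hence $\|\nabla\K(v)\|_\infty \le \|\nabla_xK\|_{\infty,q'}\|v\|_q$; by Hölder, $\|u\nabla\K(v)\|_q \le \|\nabla_xK\|_{\infty,q'}\|u\|_q\|v\|_q$ and $\|u\nabla\K(v)\|_1 \le \|\nabla_xK\|_{\infty,q'}\|u\|_1\|v\|_q$. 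Feeding this into the Duhamel term with $r=q$ and $p=q$ gives a time singularity of order $(t-s)^{-1/2}$ (the divergence costs $1/2$, the $L^q\to L^q$ step costs nothing), which is integrable, producing a factor $C T^{1/2}$; feeding it with $r=1,p=1$ likewise gives $(t-s)^{-1/2}$ and a factor $CT^{1/2}$. Thus
\begin{align*}
\|\Phi(u)\|_{\mathcal{X}_T} \le C\big(\|u_0\|_1+\|u_0\|_q\big) + C\,T^{1/2}\,\|\nabla_xK\|_{\infty,q'}\,\|u\|_{\mathcal{X}_T}^2,
\end{align*}
so $\Phi$ maps the ball of radius $R = 2C(\|u_0\|_1+\|u_0\|_q)$ into itself once $T$ is small enough depending on $R$ and $\|\nabla_xK\|_{\infty,q'}$. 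An entirely analogous computation, writing $u_1\nabla\K(u_1)-u_2\nabla\K(u_2) = (u_1-u_2)\nabla\K(u_1) + u_2\nabla\K(u_1-u_2)$, gives the Lipschitz bound $\|\Phi(u_1)-\Phi(u_2)\|_{\mathcal{X}_T} \le C T^{1/2}\|\nabla_xK\|_{\infty,q'}(\|u_1\|_{\mathcal{X}_T}+\|u_2\|_{\mathcal{X}_T})\|u_1-u_2\|_{\mathcal{X}_T}$, a contraction after possibly shrinking $T$ further. The Banach fixed-point theorem then yields a unique mild solution in the ball, and a standard continuation argument (any two solutions in $\mathcal{X}_T$ agree on a small interval, hence on all of $[0,T]$ by connectedness) upgrades uniqueness to the whole space $\mathcal{X}_T$. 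Continuity in time, i.e. $\Phi(u)\in C([0,T],L^1)\cap C([0,T],L^q)$, follows from strong continuity of the semigroup and dominated convergence in the Duhamel integral, using the integrability of $(t-s)^{-1/2}$.

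The borderline case $q'=d$ (so $q=d/(d-1)$) deserves a remark: there, $q'$ may be finite rather than $\infty$, but the argument above only ever uses $\|\nabla_xK\|_{\infty,q'}<\infty$ and the duality pairing $L^q\times L^{q'}$, so nothing changes; the exponent arithmetic $\frac12 + \frac{d}{2}(\frac1q-\frac1q) = \frac12 < 1$ is what matters and holds for every admissible $q$. The main obstacle — and really the only delicate point — is verifying that the time exponents in the Duhamel estimates stay strictly below $1$ so the integrals converge; this is exactly why the divergence is kept together with the semigroup (rather than integrating by parts and differentiating $u$), and why no gain in the spatial exponent is attempted. If one instead tried to place $u\nabla\K(v)$ in a space smaller than $L^q$ one would lose integrability in $s$, so the choice $r=p=q$ (and $r=p=1$) is forced. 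Nonnegativity of the constructed solution, if desired, follows from the maximum principle as already noted in Remark \ref{th-mass}, though it is not part of the assertion.
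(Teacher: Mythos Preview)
Your proposal is correct and follows essentially the same route as the paper: Duhamel formulation, the semigroup gradient estimate costing $(t-s)^{-1/2}$, the bilinear bound $\|u\nabla\K(v)\|_q\le\|\nabla_xK\|_{\infty,q'}\|u\|_q\|v\|_q$ via $\|\nabla\K(v)\|_\infty\le\|\nabla_xK\|_{\infty,q'}\|v\|_q$, and a fixed-point argument in $\mathcal{X}_T$ yielding the $T^{1/2}$ smallness factor. The only cosmetic difference is that for the $L^1$ estimate the paper pairs $\|u\|_q$ with $\|\nabla\K(v)\|_{q'}\le\|\nabla_xK\|_{\infty,q'}\|v\|_1$ (via Minkowski) rather than your $\|u\|_1\|\nabla\K(v)\|_\infty$, and it invokes an abstract bilinear fixed-point lemma from \cite{KS11} instead of spelling out the contraction; both lead to the same bound and conclusion.
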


\begin{rema}
Let us mention, that our previous, stability results imply the global-in time existence of solutions for strongly singular kernels provided initial data are sufficiently small. More results on the global-in-time solutions to \eqref{eq}-\eqref{eq-ini} in the whole space $\Omega=\R^d$ can be found {\it e.g.} in \cite{KS11}.
\end{rema}

\begin{rema}
Karch and Suzuki in their work \cite{KS11} studied the {\it viscous} aggregation equation, namely the equation \eqref{eq} considered in the whole space $\R^d$. They show that there are strongly singular kernels (in the sense similar to Definition \ref{def}), such that some solutions blow up in finite time. Moreover, there is a large number of works studying the blow-up of solution to chemotaxis model \eqref{chemo}, see {\it e.g.} \cite{B98,N00b,N00,NSS00} and reference therein as well as the review paper by Horstmann \cite{horst} for additional references.
\end{rema}


\section{Stability and instability of constant solutions}\label{stability}


In our reasoning, we use the following Poincar\'e inequality
\begin{align}\label{poincare}
\lambda_1\int_\Omega\psi^2\dx\leq\int_\Omega|\nabla\psi|^2\dx,
\end{align}
which is valid for all $\psi\in W^{1,2}(\Omega)$ satisfying $\int_\Omega\psi\dx=0$, where $\lambda_1$ is the first non-zero eigenvalue of $-\Delta$ on $\Omega$ under the Neumann boundary condition.


Now, we are in the position to prove the Theorem \ref{stab1}.

\begin{proof}[Proof of Proposition \ref{stab1}]

After multiplying equation \eqref{lin} by $\varphi$ and integrating over $\Omega$ we get
\begin{align*}
\frac{1}{2}\frac{d}{\dy{t}}\|\varphi(\cdot,t)\|^2_2=-\int_\Omega|\nabla\varphi|^2\dx+ M\int_\Omega\nabla\K(\varphi)\nabla\varphi\dx.
\end{align*}
Now, using the Cauchy inequality we obtain
\begin{equation}\label{pr1-e2}
\begin{split}
\frac{1}{2}\frac{d}{\dy{t}}\|\varphi(\cdot,t)\|^2_2&\leq
-\frac{1}{2}\int_\Omega|\nabla\varphi|^2\dx+\frac{M^2}{2}\int_\Omega(\nabla\K(\varphi))^2\dx\\ 
&\leq -\frac{1}{2}\int_\Omega|\nabla\varphi|^2\dx+ \frac{M^2}{2}\|\nabla\K\|_{L^2\to L^2}^2\int_\Omega\varphi^2\dx.
\end{split}
\end{equation}
Finally, we apply the Poincar\'e inequality \eqref{poincare} to get the following differential inequality
\begin{align*}
\frac{d}{\dy{t}}\|\varphi(\cdot,t)\|^2_2\leq \Big{(}-\lambda_1+ M^2\|\nabla\K\|_{L^2\to L^2}^2\Big{)}\|\varphi\|_2^2
\end{align*}
which, under assumption \eqref{linear_ass}, directly leads us to the exponential decay of $\|\varphi(t)\|_2$ as $t\to\infty$.
\end{proof}

\begin{proof}[Proof of Theorem \ref{stab2}]
After multiplying equation \eqref{pre.lin} by $\varphi$ and integrating over $\Omega$ we get
\begin{align}\label{pr2-eq1}
\frac{1}{2}\frac{d}{\dy{t}}\|\varphi\|_2^2=-J(\varphi,\varphi)+ \int_\Omega \varphi\nabla\K(\varphi)\nabla\varphi\dx
\end{align}
where $J$ is the bilinear form defined in \eqref{bilin_form}.
In \eqref{pr1-e2}, we have already got the inequality
\begin{align}\label{pr2-eq2}
-J(\varphi,\varphi)\leq -\frac{1}{2}\int_\Omega|\nabla\varphi|^2\dx+ \frac{M^2}{2}\|\nabla\K\|_{L^2\to L^2}^2\int_\Omega\varphi^2\dx.
\end{align}

To estimate the second (nonlinear) term on the right-hand side of \eqref{pr2-eq1}, we use the \\$\varepsilon$-Cauchy inequality, as follows
\begin{equation}\label{pr2-eq3}
\begin{split}
\int_\Omega\varphi\nabla\K(\varphi)\nabla\varphi\dx&\leq \varepsilon\int_\Omega(\nabla\varphi)^2\dx+ \frac{1}{4\varepsilon}\int_\Omega\varphi^2(\nabla\K(\varphi))^2\dx\\
&\leq\varepsilon\int_\Omega(\nabla\varphi)^2\dx+ \frac{1}{4\varepsilon}\|\nabla\K(\varphi)\|_\infty^2\int_\Omega\varphi^2\dx\\
&\leq\varepsilon\int_\Omega(\nabla\varphi)^2\dx+ \frac{\|\nabla_xK\|_{\infty,2}^2}{4\varepsilon}\Big{(}\int_\Omega\varphi^2\dx\Big{)}^2,
\end{split}
\end{equation}
since
\begin{align*} 
\|\int_\Omega\nabla_x K(\cdot,y)\varphi(y)\dy{y}\|_\infty\leq {\rm ess}\sup_{x\in\Omega} \|\nabla_xK(x,\cdot)\|_2\|\varphi\|_2= \|\nabla_xK\|_{\infty,2}\|\varphi\|_2.
\end{align*}
Applying inequalities \eqref{pr2-eq2} and \eqref{pr2-eq3} in \eqref{pr2-eq1} we obtain
\begin{align*}
\frac{d}{\dy{t}}\int_\Omega\varphi^2\dx\leq &\Big{(}-1+2\varepsilon\Big{)}\int_\Omega(\nabla\varphi)^2\dx\\
&+(M^2\|\nabla\K\|^2_{L^2\to L^2}) \int_\Omega\varphi^2\dx+ \frac{\|\nabla_xK\|_{\infty,2}^2}{2\varepsilon}\Big{(}\int_\Omega\varphi^2\dx\Big{)}^2,
\end{align*}
and finally using Poincar\'e inequality \eqref{poincare} we get the following differential inequality
\begin{align*}
\frac{d}{\dy{t}}\|\varphi\|_2^2\leq \Big{(}\lambda_1(2\varepsilon-1)+M^2\|\nabla\K\|_{L^2\to L^2}^2 \Big{)}\|\varphi\|_2^2+\frac{\|\nabla_xK\|_{\infty,2}^2}{2\varepsilon}\|\varphi\|_2^4.
\end{align*}
Notice, that under assumption \eqref{linear_ass}, we can find $\varepsilon>0$ small enough that the term $\Big{(}\lambda_1(2\varepsilon-1)+ M^2\|\nabla\K\|_{L^2\to L^2}^2 \Big{)}$ is negative. Thus, the proof is complete because every nonnegative solution of the differential inequality $f^\prime\leq -C_1f+C_2f^2$ with $f(t)=\|\varphi(t)\|_2^2$ and with positive constants $C_1$, $C_2$ decays exponentially to zero, provided $f(0)$ is sufficiently small.
\end{proof}

To study the instability of constant solutions, first, we consider eigenvalues of the operator $\mathcal{L}$ defined via its bilinear form \eqref{bilin_form}.

\begin{lemm}\label{rayley}
Let the operator
\begin{align}\label{op_L}
\mathcal{L}\varphi=-\Delta\varphi+\div\Big{(}M\nabla\K(\varphi)\Big{)}
\end{align}
supplemented with the Neumann boundary condition be defined by the associated bilinear form $J(\varphi,\psi)$ given in \eqref{bilin_form} on $W^{1,2}(\Omega)$.
Assume that $\nabla_xK\in L^2(\Omega\times\Omega)$ satisfies \eqref{v_1}. Then, the number
\begin{align}\label{R}
\lambda=\inf_{\substack{\varphi\in W^{1,2}(\Omega)\\ \int_\Omega\varphi\dx=0}}\frac{J(\varphi,\varphi)}{\|\varphi\|_2^2},
\end{align}
is finite and there exists $\tilde{\varphi}\in W^{1,2}(\Omega)$ such that
\begin{align*}
\lambda=
\frac{J(\tilde{\varphi},\tilde{\varphi})}{\|\tilde{\varphi}\|_2^2}.
\end{align*}
Moreover, $\mathcal{L}\tilde{\varphi}=\lambda\tilde{\varphi}$ in the weak sense.
\end{lemm}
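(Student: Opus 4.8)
The plan is to prove this by the direct method of the calculus of variations: first check that the Rayleigh quotient in \eqref{R} is bounded below over the admissible class $\{\varphi\in W^{1,2}(\Omega):\int_\Omega\varphi\dx=0\}$ (so that $\lambda$ is finite), then show that a minimizing sequence is bounded in $W^{1,2}(\Omega)$ and precompact in $L^2(\Omega)$ (so that the infimum is attained by some $\tilde\varphi\neq0$), and finally derive the Euler--Lagrange equation of the constrained minimization, which is exactly the weak form of $\mathcal L\tilde\varphi=\lambda\tilde\varphi$.

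For the lower bound I would first record that $\nabla\K$ is a bounded (indeed Hilbert--Schmidt, hence compact) operator on $L^2(\Omega)$: Cauchy--Schwarz under the integral sign gives $|\nabla\K(\varphi)(x)|\le\|\nabla_xK(x,\cdot)\|_2\,\|\varphi\|_2$, whence $\|\nabla\K(\varphi)\|_2\le\|\nabla_xK\|_{L^2(\Omega\times\Omega)}\|\varphi\|_2$, which is finite by hypothesis. Combining this with Young's inequality yields the coercivity estimate
\[
J(\varphi,\varphi)=\int_\Omega|\nabla\varphi|^2\dx-M\int_\Omega\nabla\K(\varphi)\cdot\nabla\varphi\dx\ \ge\ \frac12\int_\Omega|\nabla\varphi|^2\dx-\frac{M^2}{2}\|\nabla_xK\|_{L^2(\Omega\times\Omega)}^2\,\|\varphi\|_2^2 .
\]
Dividing by $\|\varphi\|_2^2$ shows $\lambda\ge-\tfrac{M^2}{2}\|\nabla_xK\|_{L^2(\Omega\times\Omega)}^2>-\infty$, and since the admissible class is nonempty (it contains $w_1$), also $\lambda<\infty$; hence $\lambda$ is finite.

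Next I would take a minimizing sequence $\varphi_n$ with $\int_\Omega\varphi_n\dx=0$, $\|\varphi_n\|_2=1$ and $J(\varphi_n,\varphi_n)\to\lambda$. The coercivity estimate forces $\int_\Omega|\nabla\varphi_n|^2\dx\le 2J(\varphi_n,\varphi_n)+M^2\|\nabla_xK\|_{L^2(\Omega\times\Omega)}^2$ to stay bounded, so $(\varphi_n)$ is bounded in $W^{1,2}(\Omega)$; passing to a subsequence, $\varphi_n\rightharpoonup\tilde\varphi$ weakly in $W^{1,2}(\Omega)$ and, by the Rellich--Kondrachov theorem, $\varphi_n\to\tilde\varphi$ strongly in $L^2(\Omega)$. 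The strong $L^2$ convergence yields $\|\tilde\varphi\|_2=1$ (so $\tilde\varphi\neq0$) and preserves the constraint $\int_\Omega\tilde\varphi\dx=0$. To pass to the limit in the functional I would use weak lower semicontinuity of $\varphi\mapsto\int_\Omega|\nabla\varphi|^2\dx$, together with the fact that boundedness of $\nabla\K$ on $L^2(\Omega)$ turns $\varphi_n\to\tilde\varphi$ in $L^2$ into $\nabla\K(\varphi_n)\to\nabla\K(\tilde\varphi)$ in $L^2$, so that pairing this strongly convergent sequence against the weakly convergent $\nabla\varphi_n$ gives $\int_\Omega\nabla\K(\varphi_n)\cdot\nabla\varphi_n\dx\to\int_\Omega\nabla\K(\tilde\varphi)\cdot\nabla\tilde\varphi\dx$. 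Hence $J(\tilde\varphi,\tilde\varphi)\le\liminf_n J(\varphi_n,\varphi_n)=\lambda$, while the definition of $\lambda$ applied to the admissible $\tilde\varphi$ gives $J(\tilde\varphi,\tilde\varphi)\ge\lambda\|\tilde\varphi\|_2^2=\lambda$; therefore $J(\tilde\varphi,\tilde\varphi)=\lambda=\lambda\|\tilde\varphi\|_2^2$ and $\tilde\varphi$ realizes the infimum. For the eigenvalue relation I would then fix $\psi\in W^{1,2}(\Omega)$ with $\int_\Omega\psi\dx=0$, note that $\tilde\varphi+s\psi$ is admissible for all $s$ and that $s\mapsto J(\tilde\varphi+s\psi,\tilde\varphi+s\psi)/\|\tilde\varphi+s\psi\|_2^2$ is smooth near $0$ with a minimum at $s=0$; setting its derivative at $0$ equal to zero and using $\|\tilde\varphi\|_2=1$, $J(\tilde\varphi,\tilde\varphi)=\lambda$ gives $J(\tilde\varphi,\psi)=\lambda\int_\Omega\tilde\varphi\psi\dx$ for all such $\psi$ (the first variation naturally produces the symmetric part of $J$, which coincides with $J$ for the symmetric kernels of interest, in particular the Green function of \eqref{chemo}). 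Since both sides vanish when $\psi$ is constant --- the left one because the gradient of a constant is zero, the right one because $\int_\Omega\tilde\varphi\dx=0$ --- the identity extends to all $\psi\in W^{1,2}(\Omega)$, which is precisely the weak formulation of $\mathcal L\tilde\varphi=\lambda\tilde\varphi$ under the Neumann boundary condition.

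The one genuinely delicate point I expect is the passage to the limit of the nonlocal cross term $\int_\Omega\nabla\K(\varphi_n)\cdot\nabla\varphi_n\dx$ in the lower-semicontinuity step: this quantity is not convex in $\nabla\varphi$, so weak $W^{1,2}$ convergence of $\varphi_n$ alone does not suffice, and one must use the compactness of the embedding $W^{1,2}(\Omega)\hookrightarrow L^2(\Omega)$ to upgrade it to strong $L^2$ convergence of $\varphi_n$ and hence, since $\nabla\K$ is bounded on $L^2(\Omega)$, of $\nabla\K(\varphi_n)$; this is exactly where the hypothesis $\nabla_xK\in L^2(\Omega\times\Omega)$ is used, ensuring $\nabla\K$ is bounded (in fact compact) on $L^2(\Omega)$. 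The remaining steps are routine.
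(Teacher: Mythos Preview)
Your argument is essentially the paper's own proof: the same coercivity estimate via Young's inequality to bound $\lambda$ below, the same minimizing-sequence argument using Rellich--Kondrachov for strong $L^2$ compactness and weak $W^{1,2}$ convergence, the same handling of the cross term $\int_\Omega\nabla\K(\varphi_n)\cdot\nabla\varphi_n\dx$ by pairing strong $L^2$ convergence of $\nabla\K(\varphi_n)$ against weak convergence of $\nabla\varphi_n$, and the same Euler--Lagrange computation by differentiating the Rayleigh quotient. You are in fact slightly more careful than the paper in two places: you first derive the weak eigenvalue identity for zero-mean test functions and then extend to all of $W^{1,2}(\Omega)$ via the constant direction, and you correctly flag that the first variation yields only the symmetric part of $J$, so the conclusion $J(\tilde\varphi,\psi)=\lambda\int_\Omega\tilde\varphi\psi\dx$ genuinely requires $K$ symmetric (a point the paper's Step~4 glosses over).
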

\begin{proof}
As usual, in \eqref{R} we may restrict ourselves to the case $\|\varphi\|_2=1$.
Now, let 
\begin{align*}
\mathcal{A}=\{\varphi\in W^{1,2}(\Omega): \|\varphi\|_2=1,  \int_\Omega\varphi\dx=0\}.
\end{align*}

{\it Step 1.}
First we show that $J(\varphi,\varphi)$ is bounded from below on $\mathcal{A}$. Repeating the estimates from the proof of Proposition \ref{stab1} we obtain
\begin{align*}
\Big{|}M\int_\Omega\nabla\K(\varphi)\nabla\varphi\dx\Big{|}\leq \frac{1}{2}\|\nabla\varphi\|_2^2+ \frac{M^2}{2}\|\nabla\K\|^2_{L^2\to L^2}\|\varphi\|_2^2.
\end{align*}
Hence, for every $\varphi\in\mathcal{A}$ we have
\begin{align*}
J(\varphi,\varphi)\geq\frac{1}{2}\|\nabla\varphi\|_2^2- \frac{M^2}{2}\|\nabla\K\|^2_{L^2\to L^2}\|\varphi\|_2^2\geq -\frac{M^2}{2}\|\nabla\K\|^2_{L^2\to L^2}.
\end{align*}

{\it Step 2.}
Let $\{\varphi_\})_{n\in\N}\subset \mathcal{A}$ be a minimizing sequence that is
\begin{align*}
\lambda=\lim_{n\to\infty}J(\varphi_n,\varphi_n).
\end{align*}
We show that $\varphi_n$ is bounded in $W^{1,2}(\Omega)$. Since $\varphi_n$ is the minimizing sequence, there exists a constant $C$ such that
\begin{align*}
C\geq J(\varphi_n,\varphi_n)\geq\frac{1}{2}\|\nabla\varphi_n\|_2^2- \frac{M^2}{2}\|\nabla\K\|^2_{L^2\to L^2},
\end{align*}
so we obtain
\begin{align*}
\|\nabla\varphi_n\|_2^2\leq 2C+M^2\|\nabla\K\|^2_{L^2\to L^2}.
\end{align*}
Thus, using the Rellich compactness theorem we have a subsequence, again denoted by $\varphi_n$, converging to $\tilde{\varphi}$ strongly in $L^2(\Omega)$. Moreover, by the Banach-Alaoglu theorem, we obtain, again up to subsequence, also weak convergence of $\varphi_n$ towards to $\tilde\varphi$ in $W^{1,2}(\Omega)$. 

Notice, that $\tilde\varphi\in\mathcal{A}$. Indeed, by the weak convergence in $W^{1,2}(\Omega)$ we have that $\tilde{\varphi}\in W^{1,2}(\Omega)$ and by the strong convergence in $L^2(\Omega)$ the limit function satisfy $\|\tilde{\varphi}\|_2=1$ and $\int_\Omega\tilde{\varphi}\dx=0$.

{\it Step 3.}
Now, we show that $\lim_{n\to\infty}J(\varphi_n,\varphi_n)= J(\tilde\varphi,\tilde\varphi)$.

First, notice that by the weak convergence of $\nabla\varphi_n$ in $W^{1,2}(\Omega)$ we have
\begin{align}\label{lemm-1}
\liminf_{n\to\infty}\|\nabla\varphi_n\|_2\geq\|\nabla\tilde\varphi\|_2.
\end{align}
Next, by the strong convergence of $\varphi_n$ in $L^2(\Omega)$ and the fact that $\nabla\K:L^2(\Omega)\to L^2(\Omega)$ is linear and bounded it is easy to verify that
\begin{align*}
\nabla\K(\varphi_n)\rightarrow\nabla\K(\tilde\varphi)\quad \text{as}\quad n\to\infty\quad \text{strongly in } L^2(\Omega).
\end{align*}
This property and again the weak convergence of $\tilde\varphi_n$ implies that
\begin{align*}
\int_\Omega\nabla\K(\varphi_n)\nabla\varphi_n\dx\rightarrow
\int_\Omega\nabla\K(\tilde\varphi)\nabla\tilde\varphi\dx\quad\text{as}\quad n\to\infty
\end{align*}
which by estimate \eqref{lemm-1} together with previous step completes the proof of Step 3.

{\it Step 4.}
Finally, we show that the limit function $\tilde\varphi$ satisfies the following eigenvalue problem $\mathcal{L}\tilde\varphi=\lambda\tilde{\varphi}$ in the weak sense, namely
\begin{align*}
J(\tilde\varphi,v)=\lambda\int_\Omega\tilde\varphi v\dx\qquad \text{for all}\ v\in W^{1,2}(\Omega).
\end{align*}
Let us denote
\begin{align*}
f(t)=\frac{J(\tilde\varphi+\varepsilon v,\tilde\varphi+\varepsilon v)} {\int_\Omega(\tilde\varphi+\varepsilon v)^2\dx}
\end{align*}
for any $v\in W^{1,2}$ and $\varepsilon\in\R$. This function is differentiable with respect to $\varepsilon$ near $\varepsilon=0$ and has a minimum at $0$. Hence the derivative vanishes at $\varepsilon=0$, and we get
\begin{align*}
0=f^\prime(0)&=\frac{J(\tilde\varphi,v)}{\int_\Omega(\tilde\varphi)^2\dx}- \frac{J(\tilde\varphi,\tilde\varphi)}{\int_\Omega(\tilde\varphi)^2\dx} \frac{\int_\Omega\tilde\varphi v\dx}{\int_\Omega(\tilde\varphi)^2\dx}
=J(\tilde\varphi,v)-\lambda\int_\Omega\tilde\varphi v\dx.
\end{align*}
Hence the proof of Lemma \ref{rayley} is finished.
\end{proof}

Now we are in the position to prove the Theorem \ref{instab}.
\begin{proof}[Proof of Theorem \ref{instab}]
As a standard practise, we show that under our assumptions, the linear operator
$\mathcal{L}$ defined by the form \eqref{op_L} has a negative eigenvalue $\lambda$. Then, the function
$\varphi(x,t)=e^{-\lambda t}\tilde{\varphi}(x)$
with the eigenfunction $\tilde{\varphi}$ of $\mathcal{L}$ corresponding to the eigenvalue $\lambda$, is a solution of the linearized problem \eqref{lin}-\eqref{lin-ini} such that
\begin{align*}
\|\varphi(\cdot,t)\|_2=e^{-\lambda t}\|\tilde{\varphi}\|_2\xrightarrow{t\to\infty}\infty.
\end{align*}

To do so, we use the definition of an eigenvalue of operator $\mathcal{L}$ from Lemma \ref{rayley}. In view of \eqref{R}, to prove that $\lambda<0$, it suffices to show that there exist $\varphi\in W^{1,2}(\Omega)$ that 
\begin{align*}
J(\varphi,\varphi)<0.
\end{align*}
Here, we choose $\varphi(x)=w_1(x)$, where $w_1$ is the eigenfunction of $-\Delta$ on $\Omega$ under the Neumann boundary condition satisfying $\int_\Omega w_1^2\dx=1$ and corresponding to the first non-zero eigenvalue $\lambda_1$. Then, we obtain the following relation
\begin{align*}
J(w_1,w_1)&=\int_\Omega(\nabla w_1(x))^2\dx- M\int_\Omega\int_\Omega \nabla_x K(x,y)w_1(y)\nabla w_1(x)\dy{y}\dx\\
&=\lambda_1\int_\Omega (w_1(x))^2\dx-M\lambda_1\int_\Omega\int_\Omega K(x,y)w_1(y)w_1(x)\dy{y}\dx.
\end{align*}
Now, since $\lambda_1>0$ and $\int_\Omega (w_1)^2\dx=1$, using assumption \eqref{ass-instab} and choosing $M>1/A$ we complete the proof.

\end{proof}


\section{Existence of solutions}\label{sec-ex}


We  construct  local-in-time {\it mild} solutions of \eqref{eq}--\eqref{eq-ini} which are solutions of  the following integral equation
\begin{align}
u(t) = e^{t\Delta} u_0 - \int_0^t \nabla e^{(t-s)\Delta} \big( u\nabla v\big)(s)\, \dy{s} \label{duhamel}
\end{align}
where $e^{t\Delta}$ is the Neumann heat semigroup in $\Omega$. Moreover, we use the following estimates of $\{e^{t\Delta}\}_{t\geq 0}$.

\begin{lemm}
Let  $\lambda_1>0$ denote the first nonzero eigenvalue of $-\Delta$ in $\Omega$ under Neumann boundary conditions. Then there exist constants $C_1$, $C_2$ independent of $t,f$ which have the following properties.
\begin{itemize}
\item [(i)] If  $1 \le q \le p \le +\infty$ then
\begin{align}\label{G1}
\norm{{e^{t\Delta} f}}{p} \le C\big{(}1+ t^{- \frac{d}{2}\left( \frac{1}{q}-\frac{1}{p} \right)}\big{)}\norm{f}{q}
\end{align}
holds for all $f \in L^q (\Omega)$.
\item [(ii)] If  $1 \le q \le p \le +\infty$ then
\begin{align}\label{G2}
\norm{{\partial_x e^{t\Delta} f}}{p} \le Ct^{- \frac{d}{2}
 \left(\frac{1}{q}-\frac{1}{p} \right)- \frac{1}{2}}e^{-\lambda_1 t}\norm{f}{q} 
\end{align}
is true for all $f \in L^q (\Omega)$.
\end{itemize}
\end{lemm}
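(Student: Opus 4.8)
The plan is to split $t\in(0,\infty)$ into the short-time regime $0<t\le 1$ and the long-time regime $t\ge 1$, establish each of the two estimates with the appropriate power of $t$ on each regime, and then recover the stated inequalities by absorbing bounded factors into the constant. For $0<t\le 1$ the key input is the classical Gaussian bound for the Neumann heat kernel $p_\Omega(t,x,y)$ of $\{e^{t\lap}\}$ on the bounded domain $\Omega$: there are $C,c>0$ with
\[
0\le p_\Omega(t,x,y)\le C\,t^{-d/2}e^{-c|x-y|^2/t},\qquad |\nabla_x p_\Omega(t,x,y)|\le C\,t^{-(d+1)/2}e^{-c|x-y|^2/t}
\]
for $x,y\in\Omega$ and $0<t\le 1$; equivalently one derives the ultracontractive bounds $\|e^{t\lap}\|_{L^1\to L^\infty}\le Ct^{-d/2}$ and $\|\nabla_x e^{t\lap}\|_{L^1\to L^\infty}\le Ct^{-(d+1)/2}$ from the Sobolev/Nash inequality on $\Omega$ together with the $L^1$-contractivity of the semigroup. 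Integrating the Gaussian in $y$ gives, whenever $1+\tfrac1p=\tfrac1r+\tfrac1q$, the bounds ${\rm ess\,sup}_x\norm{p_\Omega(t,x,\cdot)}{r}\le Ct^{-\frac d2(\frac1q-\frac1p)}$ and ${\rm ess\,sup}_x\norm{\nabla_x p_\Omega(t,x,\cdot)}{r}\le Ct^{-\frac d2(\frac1q-\frac1p)-\frac12}$, and Young's inequality applied to $e^{t\lap}f(x)=\int_\Omega p_\Omega(t,x,y)f(y)\dy{y}$ and to its differentiated form yields (i) and (ii) for $0<t\le1$. On this range $e^{-\lambda_1 t}\in[e^{-\lambda_1},1]$ so it is harmless in (ii), and $t^{-\frac d2(\frac1q-\frac1p)}\le 1+t^{-\frac d2(\frac1q-\frac1p)}$ gives the form in (i).

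For the long-time regime $t\ge1$ and estimate (i) I would use that the Neumann semigroup is positivity preserving and conservative ($e^{t\lap}1=1$), hence a contraction on $L^1(\Omega)$ and on $L^\infty(\Omega)$ and therefore, by Riesz--Thorin, on every $L^p(\Omega)$; consequently $\norm{e^{t\lap}f}{p}=\norm{e^{(t-1)\lap}(e^{\lap}f)}{p}\le\norm{e^{\lap}f}{p}\le C\norm{f}{q}$ by the case $t=1$ of the short-time bound, and $C\le C(1+t^{-\frac d2(\frac1q-\frac1p)})$ finishes (i). For estimate (ii) with $t\ge 1$ I would exploit the spectral gap: with $\bar f=|\Omega|^{-1}\int_\Omega f\dx$, one has $\nabla e^{t\lap}f=\nabla e^{t\lap}(f-\bar f)$ since $\nabla$ annihilates constants, and $f-\bar f$ lies in the closed $e^{t\lap}$-invariant subspace $\{g\in L^2(\Omega):\int_\Omega g\dx=0\}$, on which the spectral theorem for $-\lap$ under the Neumann condition gives $\norm{e^{s\lap}g}{2}\le e^{-\lambda_1 s}\norm{g}{2}$. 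Splitting $t=\half+(t-1)+\half$ and composing
\[
\nabla e^{t\lap}f=\bigl(\nabla e^{\frac12\lap}\bigr)\circ e^{(t-1)\lap}\circ\bigl(e^{\frac12\lap}\bigr)(f-\bar f),
\]
the outer factors are bounded $L^2\to L^p$ and $L^q\to L^2$ with constants from the fixed-time ($t=\half$) case of Step 1, the middle factor contracts by $e^{-\lambda_1(t-1)}\le e^{\lambda_1}e^{-\lambda_1 t}$, and $\norm{f-\bar f}{q}\le C\norm{f}{q}$; this gives $\norm{\nabla_x e^{t\lap}f}{p}\le Ce^{-\lambda_1 t}\norm{f}{q}$ for $t\ge1$, which together with Step 1 yields (ii) for all $t>0$, the polynomial weight being relevant only as $t\to 0^+$.

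I expect the only genuinely non-soft ingredient to be the up-to-the-boundary gradient estimate for the Neumann heat kernel, that is $\|\nabla_x e^{t\lap}\|_{L^q\to L^p}\le Ct^{-\frac d2(\frac1q-\frac1p)-\frac12}$ for small $t$. Unlike (i), this cannot be obtained by purely semigroup-theoretic or interpolation arguments, because differentiation does not preserve the Neumann boundary condition; one must invoke parabolic regularity up to $\prt\Omega$ (equivalently, for $1<p<\infty$, the $L^p(\Omega)$-boundedness of $\nabla(-\lap+aI)^{-1/2}$, with the endpoints $p\in\{1,\infty\}$ still handled through the pointwise kernel bound). The rest is routine patching of the two time regimes.
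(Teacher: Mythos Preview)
The paper does not give its own proof of this lemma; immediately after the statement it writes ``Proofs of above inequalities \eqref{G1} and \eqref{G2} are well-known and can be found \textit{e.g.} in \cite{rothe}.'' So there is no in-paper argument to compare against. Your outline is precisely the standard route to these Neumann heat-semigroup bounds and is correct in substance: Gaussian upper bounds on $p_\Omega$ and $\nabla_x p_\Omega$ for $0<t\le 1$ give the short-time $L^q\to L^p$ smoothing via Young's inequality, Markovian contractivity handles (i) for $t\ge 1$, and the spectral-gap splitting $\nabla e^{t\Delta}f=(\nabla e^{\frac12\Delta})e^{(t-1)\Delta}e^{\frac12\Delta}(f-\bar f)$ handles (ii) for $t\ge 1$. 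Your identification of the up-to-the-boundary gradient bound on the Neumann kernel as the only genuinely nontrivial ingredient is also accurate.

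One remark worth making explicit. Inequality (ii), read literally with the factor $t^{-\frac d2(\frac1q-\frac1p)-\frac12}e^{-\lambda_1 t}$ and no additive ``$1+$'', cannot hold for all $t>0$: take $f=w_1$ and $p=q$, so that $\|\nabla e^{t\Delta}w_1\|_p=e^{-\lambda_1 t}\|\nabla w_1\|_p$, while the right-hand side is $Ct^{-1/2}e^{-\lambda_1 t}\|w_1\|_p\to 0$ as $t\to\infty$, forcing $\nabla w_1\equiv 0$. What your argument actually delivers (and what one finds in \cite{rothe} and the related literature) is
\[
\|\nabla e^{t\Delta}f\|_p\le C\bigl(1+t^{-\frac d2(\frac1q-\frac1p)-\frac12}\bigr)e^{-\lambda_1 t}\|f\|_q,
\]
which matches your short-time and long-time pieces and is all that Section~\ref{sec-ex} uses (in fact the exponential factor is never invoked there). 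Your closing clause ``the polynomial weight being relevant only as $t\to 0^+$'' is exactly this observation, so you have proved the correct estimate even though the lemma's statement is slightly imprecise.
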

Proofs of above inequalities \eqref{G1} and \eqref{G2} are well-known and can be found {\it e.g.} in \cite{rothe}.

First, we construct global-in-time solutions in the case of mildly singular kernel.

\begin{proof}[Proof of Theorem \ref{local-ex1}]
We split the proof into two parts. First we construct the local-in-time solution to problem \eqref{eq}--\eqref{eq-ini} and later on we show how to extend this solution on every time interval $[0,T]$.

{\it Step 1. Local-in-time solution.}
Here, we follow the reasoning from \cite[Theorem 2.2]{KS11}. We construct the local-in-time solution to the equation \eqref{duhamel}, written as 
$u(t) =e^{t\Delta} u_0 + B(u, u)(t)$ with the bilinear form
\begin{align}
B(u, v) (t) = - \int_0^t \nabla e^{(t-s)\Delta} \big( u\nabla \K(v)\big)(s)\, \dy{s}, \label{sec-ex-eq1}
\end{align}
in the space $\mathcal{Y}_T$. Notice that  $e^{t\Delta} u_0 \in \mathcal{Y}_T$ by \eqref{G1}. To apply ideas from \cite[Theorem 2.2]{KS11}, one should prove the following estimates of the bilinear form \eqref{sec-ex-eq1}.

First, let us notice that by Minkowski's inequality we have that
\begin{align}\label{G_q_1}
\|\nabla\K(v)\|_{q^\prime}\leq\|\int_\Omega |\nabla_x K(\cdot,y)|v(y)\dy{y}\|_{q^\prime}\leq \int_\Omega \|\nabla_x K(\cdot,y)\|_{q^\prime}|v(y)|\dy{y}\leq \|\nabla_xK\|_{\infty,q^\prime}\|v\|_1
\end{align}
Now, for every $u, v \in \mathcal{Y}_T$, using \eqref{G2} combined with relation \eqref{G_q_1} we obtain 
\begin{align*}
\|B(u, v) (t)\|_1 %
&\le C\int_0^t (t-s)^{-1/2} \|u\nabla \K(v)(s)\|_1 \dy{s} \\
&\le C\int_0^t (t-s)^{-1/2} \|u (s)\|_q \|\nabla\K(v)(s)\|_{q^\prime} \dy{s}\\
&\le C\|\nabla_xK\|_{\infty,q^\prime}\int_0^t (t-s)^{-1/2} \|u (s)\|_q\|v (s)\|_1 \dy{s}.
\end{align*}
Therefore, by the argument using in \cite{KS11} we obtain 
\begin{align}\label{ex-201}
\|B(u, v) (t)\|_1  \le C T^{\frac{1}{2}(1-d(1-\frac{1}{q}))} \|\nabla_xK\|_{\infty,q^\prime}\|u\|_{\mathcal{Y}_T} \|v\|_{\mathcal{Y}_T}
\end{align}
where $\frac{1}{2}(1-d(1-\frac{1}{q}))>0$.

In a similar way, we prove the following $L^q$-estimate
\begin{align}
t^{\frac{d}{2}(1-\frac{1}{q})}\|B(u, v) (t)\|_q 
&\le Ct^{\frac{d}{2}(1-\frac{1}{q})} \int_0^t(t-s)^{-1/2}\|u\nabla\K(v)(s)\|_q\dy{s} \label{ex-101}\\
&\le
Ct^{\frac{d}{2}(1-\frac{1}{q})} \int_0^t(t-s)^{-1/2}\|u\|_q\|\nabla\K(v)(s)\|_\infty\dy{s} \nonumber\\
&\le
C\|\nabla_xK\|_{\infty,q^\prime}t^{\frac{d}{2}(1-\frac{1}{q})} \int_0^t(t-s)^{-1/2}\|u(s)\|_q\|v(s)\|_q\dy{s} \nonumber
\end{align}
since
\begin{align*} 
\|\int_\Omega \nabla_x K(\cdot,y)v(y)\dy{y}\|_\infty\leq {\rm ess}\sup_{x\in\Omega} \|\nabla_x K(x,\cdot)\|_{q^\prime}\|v\|_q.
\end{align*}
Again, by the argument using in \cite{KS11} we obtain
\begin{align}\label{ex-202}
t^{\frac{d}{2}(1-\frac{1}{q})}\|B(u, v) (t)\|_q  \le CT^{\frac{1}{2}(1-d(1-\frac{1}{q}))}\|\nabla_xK\|_{\infty,q^\prime}\|u\|_{\mathcal{Y}_T} \|v\|_{\mathcal{Y}_T}.
\end{align}
By inequalities \eqref{ex-201} and \eqref{ex-202} we obtain the following estimate of the bilinear form
\begin{align*}
 \|B(u, v) \|_{\mathcal{Y}_T} \le CT^{\frac{1}{2}(1-d(1-\frac{1}{q}))} \|\nabla_xK\|_{\infty,q^\prime} \|u\|_{\mathcal{Y}_T} \|v\|_{\mathcal{Y}_T}.
\end{align*}
Hence, choosing $T > 0$ such that 
$
4 CT^{\frac{1}{2}(1-d(1-\frac{1}{q}))}\|\nabla_xK\|_{\infty,q^\prime}\|u_0 \|_1 < 1,
$
we obtain the  solution in $\mathcal{Y}_T$ by \cite[Lemma 3.1]{KS11}.

{\it Step 2. Global solution.}
Now, it suffices to follow a standard procedure which consists in applying repeatedly previous step to equation \eqref{eq} supplemented with the initial datum $u(x,kT)$ to obtain a unique solution on the interval $[kT,(k+1)T]$ for every $k\in\mathbb{N}$. Notice, that we can pass this procedure since the local existence time $T$ depends only on $\|u_0\|_1$ and $\|\nabla_xK\|_{\infty,q^\prime}$ which implies that it does not change for all nonnegative $u_0\in L^1(\Omega)$ with the same $L^1$-norm (see Remark \ref{th-mass}).

\end{proof}

Now, we prove local-in-time existence of solutions in the case that $\K$ is strongly singular.

\begin{proof}[Proof of Theorem \ref{local-ex2}]
We assume now, that $q^\prime\in[1,d]$. Again notice that  $e^{t\Delta} u_0 \in \mathcal{X}_T$ since by \eqref{G1} we have
\begin{align*}
\|e^{t\Delta} u_0\|_{\mathcal{X}_T}\leq C(\|u_0\|_1+\|u_0\|_q).
\end{align*}

Next, for every $u, v \in \mathcal{Y}_T$, we get 
\begin{align*}
\|B(u, v) (t)\|_1 %
&\le C\int_0^t (t-s)^{-1/2} \|u\nabla \K(v)(s)\|_1 \dy{s} \\
&\le C\int_0^t (t-s)^{-1/2} \|u (s)\|_q \|\nabla\K(v)(s)\|_{q^\prime} \dy{s}\\
&\le C\|\nabla_xK\|_{\infty,q^\prime}\int_0^t (t-s)^{-1/2} \|u (s)\|_q\|v (s)\|_1 \dy{s}\\
&\le CT^{1/2}\|\nabla_xK\|_{\infty,q^\prime} \|u\|_{\mathcal{X}_T}\|v\|_{\mathcal{X}_T}
\end{align*}
where $C$ is a positive constant.

To deal with the $L^q$-norm of $B(u,v)$ we proceed similarly
\begin{align*}
\|B(u, v) (t)\|_q %
&\le C\int_0^t (t-s)^{-1/2} \|u\nabla \K(v)(s)\|_q \dy{s} \\
&\le C\|\nabla_xK\|_{\infty,q^\prime}\int_0^t (t-s)^{-1/2} \|u (s)\|_q\|v (s)\|_q \dy{s}\\
&\le CT^{1/2}\|\nabla_xK\|_{\infty,q^\prime} \|u\|_{\mathcal{X}_T}\|v\|_{\mathcal{X}_T}.
\end{align*}
Summing up these inequalities, we obtain the following estimate of the bilinear form
\begin{align*}
 \|B(u, v) \|_{\mathcal{X}_T} \le CT^{1/2}\|\nabla_xK\|_{\infty,q^\prime}  \|u\|_{\mathcal{X}_T} \|v\|_{\mathcal{X}_T}.
\end{align*}
Hence, choosing $T > 0$ such that 
$
4CT^{1/2}\|\nabla_xK\|_{\infty,q^\prime}(\|u_0 \|_1+\|u_0\|_q) < 1,
$
we obtain the  solution in $\mathcal{X}_T$ by \cite[Lemma 3.1]{KS11}.
\end{proof}

\end{document}